\newcommand{\scal}[2]{\langle{#1},{#2}\rangle}
\newcommand{\indicatrice}[1]{\mathds{1}_{{#1}}}
\theoremstyle{plain}                    
\newtheorem{theoreme}{Theorem}
\newtheorem*{theoreme*}{Theorem}
\newtheorem{corollaire}[theoreme]{Corollary}
\newtheorem{lemme}[theoreme]{Lemma}
\newtheorem{proposition}[theoreme]{Proposition}
\newtheorem*{conjectureKLS}{KLS conjecture for convex bodies}
\newtheorem*{conjectureKLSm}{KLS conjecture for log-concave measures}
\theoremstyle{definition} 
\newtheorem*{remarque}{Remark}
\newtheorem*{definition}{Definition}
\title{Spectral gap for some invariant log-concave probability measures}
\author{Nolwen Huet\textsuperscript{1}}
\date{\today}
\begin{document}
\maketitle
\footnotetext[1]{
Institut de Mathématiques de Toulouse,
UMR CNRS 5219,
Université de Toulouse, 
31062 Toulouse, France.
Email: nolwen.huet@math.univ-toulouse.fr.\\
2000 Mathematics Subject Classification:  26D10, 60E15, 28A75.}

\begin{abstract}
We show that the conjecture of Kannan, Lov\'{a}sz, and Simonovits on isoperimetric properties of convex bodies and log-concave measures, is true for log-concave measures of the form $\rho(|x|_B)dx$ on $\mathbb{R}^n$ and $\rho(t,|x|_B) dx$ on $\mathbb{R}^{1+n}$, where $|x|_B$ is the norm associated to any convex body $B$ already satisfying the conjecture. In particular, the conjecture holds  for convex bodies of revolution.
\end{abstract}

\section{Introduction}

Let $K\subset\mathbb{R}^n$ be a convex body. We denote the uniform probability measure on $K$ by $\mu_K$. We say that $K$ is isotropic if 
\begin{itemize}
\item its barycenter $\int x\, d\mu_K(x)$ is 0,
\item $\int \scal{x}{\theta}^2d\mu_K(x)$ is constant over $\theta\in\mathbb{S}^{n-1}$.
\end{itemize}  
This means that the covariance matrix of $\mu_K$ is a multiple of the identity. 
In this case, 
\[
\forall\theta\in\mathbb{S}^{n-1},\quad\int \scal{x}{\theta}^2d\mu_K(x)=\frac{\mathrm{E}_{\mu_K}|X|^2}{n}= \mathrm{E}_{\mu_K}({X_1}^2).
\]
Here $\scal{\,.\,}{\,.\,}$ stands for the euclidean scalar product of $\mathbb{R}^n$ and $|\,.\,|$ the associated  euclidean norm; $\mathbb{S}^{n-1}$ is the euclidean sphere of radius 1, $\mathrm{E}_{\mu}$ is the expectation under the measure $\mu$, and $X_1$ the first coordinate of $X$.
Let us note that others authors require more, namely $\mathrm{Vol}(K)=1$ in \cite{MilPaj} or $\int \scal{x}{\theta}^2d\mu_K=1$ in \cite{KLS}, but we do not.
For such isotropic convex bodies, Kannan, Lov\'{a}sz, and Simonovits state in \cite{KLS} a conjecture (called here KLS conjecture for short) on a certain isoperimetric property,    which can be formulated by results of Maz'ya \cite{mazya2, mazya1} and Cheeger \cite{cheeger}, and also Ledoux \cite{ledoux-geom-bounds}, as follows:
\begin{conjectureKLS}
There exists a universal constant $C$ such that for every $n\ge1$ and every isotropic convex body $K\subset\mathbb{R}^n$,  the Poincaré constant $C_\mathrm{P}(\mu_K)$ of $\mu_K$ is bounded from above by $C\ \mathrm{E}_{\mu_K}({X_1}^2)=C\ {\mathrm{E}_{\mu_K}|X|^2}/n$. 
\end{conjectureKLS}
Let us recall that the Poincaré constant $C_\mathrm{P}(\mu)$ of a measure $\mu$ is the best constant $C$ such that, for every smooth function $f$,
\[
\mathrm{Var}_{\mu} f \le C \int|\nabla f|^2 \,d\mu,
\]
where $\nabla f$ is the gradient of $f$ and $\mathrm{Var}_{\mu} f=\int\big(f-\int f d\mu\big)^2d\mu$ is its variance under $\mu$.
So the conjecture tells us that the Poincaré inequality for convex bodies is tight for linear functions, up to a universal constant.

Up to now, this conjecture is known to be true for $\ell^p$-balls with $p\ge1$ (see \cite{burago-mazya} for the euclidean case $p=2$, \cite{sasha-lp} for $p\in[1,2]$ and \cite{latala-woj} for $p\ge2$
), for the hypercube (see \cite{hadwiger} or \cite{BolLead}), and  for the regular simplex (see \cite{barthe-wolff}). Moreover, as the Poincaré inequality is stable under tensorization (\cite{bobkov-houdre-prod}), an arbitrary product of convex sets satisfying the KLS conjecture, also satisfies it. Quite obviously, it is stable under dilation, too.

One can also extend the definition of isotropy and the conjecture to any log-concave measure $\mu$ on $\mathbb{R}^n$, by just replacing $\mu_K$ by $\mu$ in the statements. Recall that a measure $\mu$
on $\mathbb{R}^n$ is log-concave if 
it satisfies the following Brunn-Minkowski inequality for all compact sets $A$, $B$ and real $\lambda\in[0,1]$:
\[
\mu(\lambda A + (1-\lambda)B)\ge \mu(A)^\lambda \mu(B)^{1-\lambda}.
\]
Equivalently $\mu$ is absolutely continuous with respect to the Lebesgue measure on an affine space of dimension  $m\le n$ and its density's logarithm is a concave function (see \cite{Borell-logconcave}). Then the KLS conjecture becomes
\begin{conjectureKLSm}
Let $\mu$ be a log-concave probability measure on $\mathbb{R}^n$. If $\mu$ is isotropic then $C_\mathrm{P}(\mu)\le C\ {\mathrm{E}_{\mu}({X_1}^2)}=C\ {\mathrm{E}_{\mu}|X|^2}/n$, where $C$ is a universal constant, independent of $n$.
\end{conjectureKLSm}
This conjecture seems rather difficult to tackle. It has been only checked in cases where some additional structure is involved.
For instance, the conjecture holds for any log-concave product measure or more generally for any product of measures satisfying the KLS conjecture \cite{bobkov-houdre-prod}, which generalizes the case of the hypercube. In addition, the conjecture was confirmed by Bobkov  for spherically symmetric log-concave measures in \cite{bobkov-radial-logconcave}, which generalizes the case of the euclidean ball. Let us note also the exitence of dimension-dependent bounds for the Poincaré constant of isotropic log-concave measures:
\begin{equation}\label{borne-small-n}
C_\mathrm{P}(\mu)\le C\ \mathrm{E}_\mu|X|^2\le Cn\, \mathrm{E}_\mu({X_1}^2)
\end{equation}
whenever $\mu$ is an isotropic log-concave measure on $\mathbb{R}^n$. This follows from a theorem proved by Kannan, Lov\'{a}sz, 	and Simonovits \cite{KLS} thanks to their localization lemma, and also deduced by Bobkov \cite{bobkov-logconcave} from its isoperimetric inequality for log-concave measures.
\begin{theoreme}[Kannan--Lov\'{a}sz--Simonovits \cite{KLS}, Bobkov \cite{bobkov-logconcave}]\label{thm:Pvar}
Let $X=(X_1,\ldots,X_n)$ be a log-concave random vector in $\mathbb{R}^n$ of law $\mu$. Then, there exists a universal constant $C$ such that
\[
C_\mathrm{P}(\mu)\le C\, \mathrm{E}_\mu\big|X-\mathrm{E}(X)\big|^2 = C\sum_{i=1}^n\mathrm{Var}_\mu(X_i).
\]
\end{theoreme}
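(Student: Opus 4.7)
The plan is to route through a Cheeger-type isoperimetric inequality for log-concave measures, using the bridge between isoperimetry and Poincaré already alluded to in the paragraph on Maz'ya, Cheeger and Ledoux. First, I would reduce to the centered case: the Poincaré constant is translation-invariant and so is $\mathrm{E}_\mu|X-\mathrm{E}(X)|^2$, so it suffices to bound $C_\mathrm{P}(\mu)$ by $C\, \mathrm{E}_\mu|X|^2$ when $\mathrm{E}(X)=0$. Second, I would appeal to the Maz'ya--Cheeger inequality $C_\mathrm{P}(\mu)\le 4/h(\mu)^2$, where $h(\mu)$ denotes the Cheeger isoperimetric constant of $\mu$.

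The heart of the argument is then a lower bound on $h(\mu)$ of the form $h(\mu)\ge c/\sqrt{\mathrm{E}_\mu|X|^2}$ for any centered log-concave probability measure on $\mathbb{R}^n$. Once this is in hand, combining it with the Cheeger--Maz'ya bound gives the claim with a universal constant. So the whole theorem is reduced to this isoperimetric inequality for log-concave measures, which is precisely the content of Bobkov's result cited in the statement.

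To prove the isoperimetric bound one has essentially two routes, corresponding to the two attributions in the statement. The first, following Kannan--Lovász--Simonovits, argues by contradiction through their localization lemma: if the desired inequality fails for some test function $f$ and some centered log-concave $\mu$ on $\mathbb{R}^n$, the localization lemma produces a one-dimensional log-concave density on a segment, a needle, on which an analogous one-dimensional inequality also fails; but in dimension one, the inequality is checked by direct computation, using only that the density is log-concave and has controlled second moment, and the contradiction closes the argument. The second route, due to Bobkov, proves the isoperimetric inequality $h(\mu)\ge c/\sqrt{\mathrm{E}_\mu|X|^2}$ more directly, again with log-concavity as the sole structural input.

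The main obstacle is plainly the isoperimetric step itself: turning log-concavity and a second-moment bound into a dimension-free Cheeger inequality. The reduction to dimension one via localization removes the dimension dependence but shifts the difficulty to a careful one-variable analysis of log-concave densities on a bounded interval, where one must extract the right relation between the mean, the variance and the boundary measure without losing any factor that depends on~$n$.
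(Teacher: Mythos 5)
The paper does not actually prove Theorem~\ref{thm:Pvar}; it is stated as a cited result, with the surrounding text indicating the two possible proofs: via the localization lemma of Kannan--Lov\'asz--Simonovits, and via Bobkov's isoperimetric inequality for log-concave measures. So there is no ``paper's own proof'' to compare yours against, only the attribution.

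With that caveat, your outline correctly identifies the standard route and matches the attribution: reduce to the centered case, pass from isoperimetry to the spectral gap through the Cheeger--Maz'ya inequality $C_\mathrm{P}(\mu)\le 4/h(\mu)^2$, and then establish the lower bound $h(\mu)\ge c/\sqrt{\mathrm{E}_\mu|X|^2}$ either by KLS localization (reduce a putative counterexample to a log-concave needle on an interval and derive a contradiction in dimension one) or by Bobkov's direct argument. Two small points worth keeping in mind if you were to carry this out: (a) what KLS actually prove is stronger, an isoperimetric bound in terms of the first absolute moment $\mathrm{E}|X-\mathrm{E}X|$ rather than the second, and the $L^2$ form follows from that by Cauchy--Schwarz; (b) the heavy lifting is precisely the step you flag as the ``main obstacle,'' namely the dimension-free isoperimetric lower bound, and your write-up defers this entirely to the cited references. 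As it stands, then, your proposal is an accurate roadmap of the known proof rather than a self-contained argument; it does not fill the gap that the citation is there to cover.
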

This was improved by Bobkov \cite{bobkov-isop-const} to:
\[
C_\mathrm{P}(\mu)\le C\, \left(\mathrm{Var}|X|^2\right)^{1/2}.
\]
Using  Klartag's power-law estimates \cite{klartag-TCL-powerlaw} for $\mathrm{Var}|X|^2$, this yields (cf \cite{EMilmanConvIsopSpectConc}) the following improvement of  the bound \eqref{borne-small-n}: for any $\varepsilon>0$, there exists $C_\varepsilon>0$ such that
\begin{equation*}
C_\mathrm{P}(\mu)\le  C_\varepsilon n^{1-1/5+\varepsilon}\ {\mathrm{E}({X_1}^2)}.
\end{equation*}
If moreover $\mu$ is uniform on an unconditional convex set, i.e. invariant under coordinate reflections, Klartag shows \cite{klartag-uncond} that
\begin{equation*}\label{borne-incond-klartag}
C_\mathrm{P}(\mu)\le  C (\log n)^{2}\ {\mathrm{E}({X_1}^2)}.
\end{equation*}

In Section \ref{sec:B-sym}, we deal with the case of log-concave measures symmetric  with respect to the norm associated to a convex body $B$ satisfying the KLS conjecture. It encompasses the result of Bobkov on spherically symmetric log-concave measures. Bobkov's proof relied on the tensorization of the radial measure with the uniform measure on the sphere. He used the following property of the law of the radius: 
\begin{theoreme}[Bobkov \cite{bobkov-radial-logconcave}]\label{prop:rad}
Let $\nu$ be a probability measure on $\mathbb{R}_+$ defined by
\[
\nu(dr)=r^{n-1}\rho(r)\indicatrice{\mathbb{R}_+}(r)dr
\]
with $\rho$ log-concave. Then 
\[
\mathrm{Var}_\nu(r)\le\frac{\mathrm{E}_\nu(r^2)}{n}.
\]
\end{theoreme}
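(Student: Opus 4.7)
Plan. Expanding the variance, the inequality $\mathrm{Var}_\nu(r) \le \mathrm{E}_\nu(r^2)/n$ is equivalent to the moment comparison
\[
(n-1)\, I_{n-1}\, I_{n+1} \le n\, I_n^2, \qquad \text{where } I_k := \int_0^\infty r^k \rho(r)\, dr,
\]
so the goal is to prove this inequality for log-concave~$\rho$.

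A first natural attempt uses that $\nu$ is itself log-concave on $\mathbb{R}_+$, since its density $r^{n-1}\rho(r)$ is a product of log-concave factors. Writing $\nu \propto e^{-\Psi(r)}\, dr$ with $\Psi(r) := -(n-1)\log r - \log\rho(r)$, the potential $\Psi$ satisfies $\Psi''(r) \ge (n-1)/r^2$ already from the first term alone, so the Brascamp--Lieb inequality applied to the test function $f(r)=r$ yields
\[
\mathrm{Var}_\nu(r) \le \int_0^\infty \frac{1}{\Psi''(r)}\, d\nu(r) \le \frac{\mathrm{E}_\nu(r^2)}{n-1},
\]
which falls short of the claim by a factor $n/(n-1)$. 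Closing this gap is the crux of the proof and demands genuine use of the non-negativity of $-(\log\rho)''$ rather than discarding it.

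The strategy I would pursue is to prove the stronger statement that $t \mapsto \log(I_t/\Gamma(t+1))$ is concave on $(0,\infty)$. Since $(n!)^2/((n-1)!\,(n+1)!) = n/(n+1)$, such concavity at $t=n$ gives $n\, I_{n-1}\, I_{n+1} \le (n+1)\, I_n^2$, which implies the desired bound because $(n+1)/n < n/(n-1)$. Differentiating $\log(I_t/\Gamma(t+1))$ twice under the integral, this concavity reduces to the pointwise variance comparison
\[
\mathrm{Var}_{\pi_t^\rho}(\log R) \le \mathrm{Var}_{\mathrm{Gamma}(t+1, 1)}(\log R),
\]
where $\pi_t^\rho$ is the probability measure on $\mathbb{R}_+$ with density proportional to $r^t\rho(r)$. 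This is natural because $\pi_t^\rho$ is obtained from the Gamma measure by the log-concave tilt $r \mapsto \rho(r)\, e^r$, with equality when $\rho$ itself is exponential.

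The main obstacle is to prove this variance comparison sharply: a direct Brascamp--Lieb bounds both sides by $1/t$ without distinguishing them. A potentially cleaner route is via a bilinear form of the Kannan--Lov\'{a}sz--Simonovits localization lemma, which would reduce the moment inequality $(n-1)\, I_{n-1}\, I_{n+1} \le n\, I_n^2$ to the extremal log-affine densities $\rho(r) = e^{ar}\indicatrice{[c,d]}(r)$, for which the inequality becomes an elementary algebraic statement in $a,c,d$ that can be verified by direct computation.
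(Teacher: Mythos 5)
The paper does not prove this statement at all --- it is quoted from Bobkov's 2003 paper and used as a black box --- so there is no in-paper proof to compare against. Judged on its own, your proposal correctly reformulates the claim as the moment inequality $(n-1)I_{n-1}I_{n+1}\le n\,I_n^2$, correctly observes that Brascamp--Lieb applied to $\nu$ only yields the weaker factor $1/(n-1)$, and correctly identifies the right strengthening: the log-concavity in $t$ of $t\mapsto I_t/\Gamma(t+1)$, which at $t=n$ gives $n\,I_{n-1}I_{n+1}\le(n+1)I_n^2$ and hence (since $(n+1)/n< n/(n-1)$) the claim. This is in fact essentially Borell's lemma on normalized moments of log-concave densities, and it is the standard route to Bobkov's theorem; the differential reformulation as a variance comparison between $\pi_t^\rho$ and the Gamma law is also accurate.

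The genuine gap is that the central lemma is never established. You explicitly flag that you cannot prove the variance comparison $\mathrm{Var}_{\pi_t^\rho}(\log R)\le\mathrm{Var}_{\mathrm{Gamma}(t+1,1)}(\log R)$, since Brascamp--Lieb gives the same bound $1/t$ on both sides, and general log-concave tilts need not contract variance, so no off-the-shelf comparison applies. The fallback via bilinear (four-function) localization is plausible --- the inequality $(n-1)I_{n-1}I_{n+1}\le n\,I_n^2$ is of the right form, and the log-affine needle cases $\rho=e^{ar}\indicatrice{[c,d]}$ do check out elementarily (for instance $\rho=e^{-r}$ gives $n^2-1\le n^2$ and $\rho=\indicatrice{[0,1]}$ gives $(n-1)(n+1)^2\le n^2(n+2)$) --- but you neither verify the hypotheses of the four-function theorem for your choice of $f_1,\dots,f_4$ nor carry out the needle computation. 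Thus what you have is a correct reduction plus a correct identification of the key lemma, but not a proof; to close the gap one needs either a proof of Borell's moment log-concavity (a short Pr\'ekopa--Leindler-type argument exists) or a complete invocation of the KLS four-function theorem with the compatibility conditions checked.
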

Here, a natural idea would be to use the polar representation $X=R\theta$, where $R=|X|_B$ is the norm of $X$, and the distribution of $\theta$ is the cone measure on $\partial B$. However, in the non-euclidean case, the differentiation on $\partial B$ is more difficult to handle. So we choose to  decompose $X$ into $SU$ where $S\in\mathbb{R}_+$ and $U$ is uniform on $B$.
Using the same method, we study  convex bodies of revolution in Section \ref{sec:B-revol}, as well as more general log-concave measures of the form $\rho(x_0,|x_1|_{B_1},\ldots,|x_k|_{B_k})dx$, with convex bodies $B_i$ satisfying the KLS conjecture.
To simplify the statements, we use the following definition. 
\begin{definition}
An isotropic convex body $B$ (respectively an isotropic log-concave probability $\mu$) 
is said to satisfy KLS with constant $C$ if $C_\mathrm{P}(\mu_B)\le C \mathrm{E}_{\mu_B}(|X|^2)/n$ (respectively $C_\mathrm{P}(\mu)\le C \mathrm{E}_{\mu}(|X|^2)/n$). 
\end{definition}
If $X$,$Y$ are random variables and $\mu$ is a probability measure, we will note $X\sim Y$ when the two random variables have the same distribution, and $X\sim \mu$ when $\mu$ is the law of  $X$.
We close this introduction by stating a very useful theorem due to E. Milman (see  \cite[Theorem 2.4]{EMilmanConvIsopSpectConc} where  a stronger result  is stated).
\begin{theoreme}[E. Milman \cite{EMilmanConvIsopSpectConc}]
\label{thm:Pinfty}
Let $\mu$ be a log-concave probability measure. If there exists $C>0$ such that for every smooth function $f$, 
\[
\mathrm{Var}_\mu(f)\le C \big\||\nabla f|\big\|_\infty^2,
\]
then $C_\mathrm{P}(\mu)\le c\, C$, where $c>0$ is a universal constant.
\end{theoreme}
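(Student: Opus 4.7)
The plan is to follow E. Milman's strategy, whose essence is that for a log-concave measure \emph{any} nontrivial control of variance by the supremum of the gradient forces a full spectral gap. The route is: (i) turn the hypothesis into a one-scale concentration estimate, (ii) bootstrap it, via log-concavity, to exponential concentration, (iii) upgrade the latter to a Cheeger-type linear isoperimetric inequality, (iv) conclude with the Maz'ya--Cheeger inequality.

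For step (i) I would test the hypothesis on the $1$-Lipschitz functions $f_t(x)=\min(d(x,A),t)$, where $A$ is any Borel set with $\mu(A)\ge 1/2$. Since $\||\nabla f_t|\|_\infty\le 1$, the assumption gives $\mathrm{Var}_\mu f_t\le C$, while $f_t\equiv 0$ on $A$ and $f_t\equiv t$ on $A_t^c:=\{d(\cdot,A)\ge t\}$. A short Paley--Zygmund-type computation then shows that for $t$ of order $\sqrt{C}$ one has $\mu(A_t^c)\le 1/4$. This is exactly the kind of single-scale concentration to which the log-concave machinery can be applied.

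Step (ii) is where log-concavity first plays a role. By a Gromov--V. Milman type iteration, combined with the fact that for log-concave measures the $s$-enlargement of a half-space behaves well under the Prékopa--Leindler inequality, a concentration estimate at one scale self-improves to an exponential one of the form $\mu(A_t^c)\le C_1 \exp(-ct/\sqrt{C})$ for every $t\ge 0$. Step (iii) is the heart of the argument: for a log-concave $\mu$, exponential concentration implies the linear isoperimetric inequality
\[
\mu^+(\partial A)\ge \frac{c'}{\sqrt{C}}\,\mu(A)\bigl(1-\mu(A)\bigr),
\]
either via the Kannan--Lov\'{a}sz--Simonovits localization lemma (reducing to one-dimensional log-concave densities where the statement is explicit) or via a semigroup/Bakry--\'Emery argument. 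This implication is genuinely log-concave, since in general exponential concentration is strictly weaker than a Cheeger inequality; it is the main obstacle and the technical core of Milman's theorem, and it cannot be bypassed by soft arguments.

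Step (iv) is then immediate: the Maz'ya--Cheeger inequality converts a Cheeger constant $h$ into the Poincaré bound $C_\mathrm{P}(\mu)\le 4/h^2$, which combined with step (iii) yields $C_\mathrm{P}(\mu)\le c\,C$ for a universal constant $c$. Throughout, every intermediate constant is dimension-free, because the only two ingredients that enter quantitatively are the assumed constant $C$ and the log-concavity of $\mu$; in particular, the whole chain is invariant under replacing $\mu$ by its push-forward under any isometry, which is essential for the applications to $B$-invariant measures in the sequel.
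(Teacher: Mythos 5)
The paper does not prove this theorem; it cites it verbatim from E.~Milman's article \cite{EMilmanConvIsopSpectConc}, where a stronger statement appears as Theorem~2.4, so there is no internal proof to compare your proposal against.

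Your sketch accurately reproduces the architecture of Milman's argument: testing the hypothesis on truncated distance functions $f_t=\min(d(\cdot,A),t)$ to extract one-scale concentration (and the Paley--Zygmund-style computation at that step is correct: if $\mu(A)\ge 1/2$ and $\mathrm{Var}_\mu f_t\le C$ then $\mu(A_t^c)<1/4$ once $t\gtrsim\sqrt{C}$), bootstrapping via log-concavity to exponential concentration, converting concentration into a Cheeger-type isoperimetric inequality, and finishing with Maz'ya--Cheeger. The difficulty is that step~(iii), the passage from exponential concentration to \emph{linear} isoperimetry for log-concave measures, \emph{is} the main theorem of Milman's paper. It is precisely the assertion that under convexity ``concentration implies isoperimetry,'' which is false for general measures and is the deep content here. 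You acknowledge this yourself (``the technical core\dots cannot be bypassed by soft arguments'') and supply no argument beyond naming two possible tools, neither of which is developed. As written, the proposal therefore reduces the theorem to the very result of which it is a corollary, which is circular. As a roadmap explaining where the cited theorem comes from it is correct and helpful; as an independent proof it has a genuine, explicitly acknowledged gap at its center, and filling that gap would amount to reproving Milman's theorem.
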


\section{$B$-symmetric log-concave measures}\label{sec:B-sym}

Let $B$ be a convex body on $\mathbb{R}^n$, whose interior contains 0. We associate to $B$ the (non-necessary symmetric) norm $|\,.\,|_B$ defined by
\[
|x|_B=\inf\left\{\lambda>0,\ \frac x \lambda\in B\right\}.
\] 
Then $B$ and its boundary $\partial B$ correspond respectively to the unit ball and the unit sphere for this norm. Let us note respectively  $\mu_B$ and $\sigma_B$ the uniform measure on $B$ and the cone measure on $\partial B$ normalized so as to be probability measures.
Recall that the cone measure is characterized by  the following decomposition formula for all measurable $f:\mathbb{R}^n\to\mathbb{R}$:
\[
\int_B f \,d\mu_B = \int_0^1  nr^{n-1} \int_{\partial B} f(r\theta) 
\,d\sigma_B(\theta)\ dr.
\]
Let $\mu$ be a probability measure on $\mathbb{R}^n$ with density  $\rho(|x|_B)$ with respect to the Lebesgue measure. If $\rho$ is log-concave and non-increasing then $\mu$ is log-concave. In that case, we say that $\mu$ is a $B$-symmetric log-concave probability measure. Note that the isotropy of $\mu$ amounts to the isotropy of $B$.

\begin{proposition}\label{prop:B-sym}
Let $B$ be an isotropic convex body of $\mathbb{R}^n$ satisfying KLS with constant $C$. 
Then any probability measure $\mu(dx)=\rho(|x|_B)dx$ with $\rho$ log-concave and non-increasing on $\mathbb{R}^+$, satisfies KLS with constant $\alpha C$, where $\alpha$ is a universal constant.
\end{proposition}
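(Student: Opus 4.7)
The plan is to combine E.\ Milman's Theorem \ref{thm:Pinfty} with the decomposition $X=SU$ suggested in the introduction, where $U\sim\mu_B$ and $S\ge 0$ is independent of $U$. Using the cone decomposition formula for $\mu_B$, one checks that such an $S$ exists with density $-s^n\rho'(s)\,\mathrm{Vol}(B)$ when $\rho$ is smooth (the general case follows by approximation). Two identities will be used repeatedly: $|X|=S|U|$ yields $\mathrm{E}|X|^2=\mathrm{E}(S^2)\,\mathrm{E}|U|^2$; and the alternative polar factorisation $|X|_B = S\,|U|_B$ (with $S\perp|U|_B$), together with the explicit moments $\mathrm{E}(|U|_B^k)=n/(n+k)$ on $[0,1]$, will enter at the last step. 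By Milman's theorem it suffices to show, for every smooth $f$ with $M:=\|\nabla f\|_\infty$, the bound $\mathrm{Var}_\mu f \le \alpha'\,C\,M^2\,\mathrm{E}|X|^2/n$.

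Split $\mathrm{Var}_\mu f = \mathrm{E}\bigl[\mathrm{Var}(f(SU)\mid S)\bigr] + \mathrm{Var}(g(S))$ with $g(s):=\mathrm{E}_U f(sU)$. For the first summand, $u\mapsto f(su)$ has gradient $s\nabla f(su)$, so the KLS hypothesis on $B$ together with $|\nabla f|\le M$ yields $\mathrm{Var}_{\mu_B}(f(sU))\le (C/n)\,\mathrm{E}|U|^2\, s^2 M^2$; averaging in $S$ gives $\mathrm{E}[\mathrm{Var}(f(SU)\mid S)]\le (C/n)\,M^2\,\mathrm{E}|X|^2$. For the second summand, $|g'(s)|\le M\,\mathrm{E}|U|$ by differentiation under the expectation, so $g$ is Lipschitz with constant $M\,\mathrm{E}|U|$, and comparing $S$ with an independent copy gives $\mathrm{Var}(g(S))\le M^2\,\mathrm{E}|U|^2\,\mathrm{Var}(S)$ after Cauchy--Schwarz.

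Everything thus reduces to the one-dimensional inequality $\mathrm{Var}(S)\le\mathrm{E}(S^2)/n$, which is the heart of the proof and the main obstacle. I would prove it by invoking the (different, non-independent) polar decomposition $X=R\Theta$ with $R:=|X|_B$ and $\Theta\sim\sigma_B$: then $R$ has density proportional to $r^{n-1}\rho(r)$, so Bobkov's Theorem \ref{prop:rad} applies to give $\mathrm{Var}(R)\le\mathrm{E}(R^2)/n$. Using the factorisation $R=S|U|_B$ with $S\perp|U|_B$, the moments of $S$ and $R$ are related by $\mathrm{E}(S^k)=(n+k)\mathrm{E}(R^k)/n$ for $k=1,2$; substituting into $\mathrm{Var}(S)=\mathrm{E}(S^2)-(\mathrm{E}S)^2$ and using the lower bound $(\mathrm{E}R)^2\ge (n-1)\mathrm{E}(R^2)/n$ implicit in Bobkov's inequality yields, after a short algebraic simplification, the claimed bound.

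Summing the two contributions produces $\mathrm{Var}_\mu f\le \alpha'\,C\,M^2\,\mathrm{E}|X|^2/n$, whence Theorem \ref{thm:Pinfty} concludes. The reason one cannot argue more directly is that the density $-s^n\rho'(s)$ of $S$ need not itself be log-concave even when $\rho$ is log-concave and non-increasing, which forbids a straightforward Poincar\'e inequality on $S$; the detour through the second polar pair $(R,\Theta)$, where Bobkov's radial theorem is available, is what unlocks the $1/n$ factor.
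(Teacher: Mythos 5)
Your proof is correct, and it takes a genuinely different route from the paper's, even though it relies on the same raw ingredients (the decomposition $X=SU$ from Lemma~\ref{lem:decomposition}, Bobkov's Theorem~\ref{prop:rad} on the law of $R=|X|_B$, the KLS hypothesis on $B$, and E.~Milman's Theorem~\ref{thm:Pinfty}). The essential difference is the order of conditioning in the law of total variance. You condition on $S$ first, so the inner conditional variance $\mathrm{Var}_U\bigl(f(sU)\bigr)$ is handled directly by the KLS inequality on $B$, and the outer term $\mathrm{Var}\bigl(g(S)\bigr)$ with $g(s)=\mathrm{E}_U f(sU)$ only requires a Lipschitz bound on $g$ together with the \emph{variance} bound $\mathrm{Var}(S)\le\mathrm{E}(S^2)/n$, which you correctly derive by combining $R=S|U|_B$ (with $S\perp|U|_B$, $\mathrm{E}|U|_B^k=n/(n+k)$) with Bobkov's $\mathrm{Var}(R)\le\mathrm{E}(R^2)/n$: indeed $\mathrm{Var}(S)=\tfrac{n+2}{n}\mathrm{E}(R^2)-\bigl(\tfrac{n+1}{n}\bigr)^2(\mathrm{E}R)^2\le\tfrac{n^2+n+1}{n^3}\mathrm{E}(R^2)\le\tfrac{n+2}{n^2}\mathrm{E}(R^2)=\mathrm{E}(S^2)/n$. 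The paper instead conditions on $U$ first, which forces it to prove a full sup-norm Poincar\'e-type inequality for $S$ (Lemma~\ref{lem:PS}), obtained via the one-dimensional log-concave Poincar\'e inequality for $R$ (Theorem~\ref{thm:Pvar} or Corollary~4.3 of \cite{bobkov-logconcave}) and an integration by parts linking moments of $S$ to those of $R$. Your route avoids both Theorem~\ref{thm:Pvar} and the integration by parts, and even yields a marginally cleaner constant ($C+1$ rather than $c+C$), at the modest cost of the explicit moment algebra above. Your concluding observation that the density $-s^n\rho'(s)$ of $S$ need not be log-concave is accurate and correctly identifies why a one-line Poincar\'e inequality for $S$ is unavailable, and why the detour through $R$ is needed in both arguments.
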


To prove the latter, we use the following decomposition of $\mu$.
\begin{lemme}\label{lem:decomposition}
Let $X$ be a random variable on $\mathbb{R}^n$ of law $\mu(dx)=\rho(|x|_B)dx$ with $\rho$ log-concave and non-increasing on $\mathbb{R}^+$. Then
$X\sim SU$ where $U$ and $S$ are independent random variables respectively on $B$ and $\mathbb{R}^+$,  of law $U\sim\mu_B$ and $S\sim - \mathrm{Vol}(B) s^{n}\rho'(s) \indicatrice{R^+}(s) ds$.
\end{lemme}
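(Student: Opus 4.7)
The plan is to verify the claimed distributional identity by computing $\mathrm{E}[f(SU)]$ for an arbitrary bounded Borel function $f$ and checking that it equals $\int f\,d\mu$. Before that, I should verify that the proposed law of $S$ is actually a probability measure. Since $\rho$ is log-concave on $\mathbb{R}^+$, it is locally absolutely continuous on the interior of its support, so $\rho'$ is defined a.e.; being non-increasing, $\rho' \le 0$, and therefore $-\mathrm{Vol}(B)s^n\rho'(s)\indicatrice{\mathbb{R}^+}(s)$ is a non-negative function.

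Next I would check that this density integrates to $1$. From the cone-measure decomposition applied to $f\equiv 1$ on $\mathbb{R}^n$ one obtains $n\mathrm{Vol}(B)\int_0^\infty s^{n-1}\rho(s)\,ds = 1$, which in particular shows that $s^{n-1}\rho(s)$ is integrable; combined with the monotonicity of $\rho$ this forces $s^n\rho(s)\to 0$ as $s\to\infty$. Integrating by parts,
\[
\int_0^\infty -\mathrm{Vol}(B) s^n \rho'(s)\,ds = \mathrm{Vol}(B)\bigl[-s^n\rho(s)\bigr]_0^\infty + n\mathrm{Vol}(B)\int_0^\infty s^{n-1}\rho(s)\,ds = 1.
\]

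For the main identity I would use independence and the fact that $\mu_B$ has density $\indicatrice{B}/\mathrm{Vol}(B)$ to write
\[
\mathrm{E}[f(SU)] = \int_0^\infty \bigl(-s^n\rho'(s)\bigr) \int_B f(su)\,du\,ds,
\]
then perform the change of variables $v = su$ in the inner integral; since $s>0$, $du = s^{-n}\,dv$ and $u\in B$ iff $|v|_B\le s$, giving
\[
\mathrm{E}[f(SU)] = \int_0^\infty \bigl(-\rho'(s)\bigr)\int_{\mathbb{R}^n} f(v)\,\indicatrice{|v|_B\le s}\,dv\,ds.
\]
Applying Fubini and integrating the $s$ variable first yields
\[
\mathrm{E}[f(SU)] = \int_{\mathbb{R}^n} f(v)\int_{|v|_B}^\infty \bigl(-\rho'(s)\bigr)\,ds\,dv = \int_{\mathbb{R}^n} f(v)\rho(|v|_B)\,dv = \mathrm{E}[f(X)],
\]
where I use once more that $\rho(s)\to 0$ at infinity.

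There is no real obstacle in this lemma: the content is just an honest change of variables together with a one-dimensional integration by parts. The only small care required is to justify that $\rho$ and $s^n\rho(s)$ vanish at infinity, and that the integrand appearing after Fubini is non-negative (so Fubini--Tonelli applies without integrability issues) — both follow from the standing assumptions that $\rho$ is non-increasing and that $\rho(|x|_B)$ is a probability density on $\mathbb{R}^n$.
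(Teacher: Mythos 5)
Your proof is correct, but it takes a genuinely different route from the paper's. The paper first invokes the classical polar decomposition twice, writing $X=R\theta$ and $U=T\theta$ with $\theta\sim\sigma_B$ (the cone measure on $\partial B$), $R\sim n\mathrm{Vol}(B)r^{n-1}\rho(r)\indicatrice{\mathbb{R}^+}(r)dr$ and $T\sim n t^{n-1}\indicatrice{[0,1]}(t)dt$; the lemma is thereby reduced to the purely one-dimensional identity $ST\sim R$, which is then checked by a Fubini/integration-by-parts computation on $\mathbb{R}^+\times[0,1]$. You instead skip the angular factorization altogether and verify $\mathrm{E}[f(SU)]=\mathrm{E}[f(X)]$ directly via the $n$-dimensional dilation $v=su$ and one application of Tonelli. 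Both arguments hinge on the same elementary facts (non-positivity of $\rho'$, integrability of $s^{n-1}\rho(s)$, and $\rho(s)\to0$), and your derivation of $s^n\rho(s)\to0$ from integrability plus monotonicity is slightly more self-contained than the paper's appeal to exponential decay of log-concave densities. What the paper's route buys is that it makes explicit the one-dimensional mechanism $R\sim ST$ behind the decomposition, which is conceptually the same move as Bobkov's tensorization and foreshadows how $S$ will be used later; what your route buys is brevity and independence from the cone-measure formalism (you only need it for the normalization check, and even that could be replaced by a direct Fubini on $\rho(|x|_B)dx$).
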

Note that, as $\rho$ is log-concave, $\rho$ is locally Lipschitz, and thereby is  differentiable almost everywhere and satisfies the fundamental theorem of calculus (see for instance \cite{rudin}).
\begin{proof}
Let us recall the classical polar decomposition: if $\theta$ and $R$ are independent random variables respectively on $\partial B$ and $\mathbb{R}^+$,  of law $\theta\sim\sigma_B$ and $R\sim n \mathrm{Vol}(B) r^{n-1}\rho(r)\indicatrice{R^+}(r) dr$, then $R\theta\sim \mu$. 
Let $T\sim n  t^{n-1} \indicatrice{[0,1]}(t) dt$ be independent of $\theta\sim\sigma_B$. By the same polar decomposition, $U=T\theta$ is uniform on $B$. So, if $S\sim - \mathrm{Vol}(B) s^{n}\rho'(s) \indicatrice{R^+}(s) ds$ is independent of $T$ and $\theta$,  it remains only to show that $ST\sim R$ to prove that $SU=(ST)\theta\sim\mu$. 
 Let $f:\mathbb{R}^n\to \mathbb{R}$ be a measurable function, then
\begin{align*}
\mathrm{E}\big(f(ST)\big)&= \iint f(st)\ n\mathrm{Vol}(B)\, t^{n-1}s^n \big(-\rho'(s)\big)\,   \indicatrice{[0,1]}(t) \indicatrice{R^+}(s)\, dtds\\
&= \int f(r)\ n\mathrm{Vol}(B)\, r^{n-1} \indicatrice{R^+}(r) \left(\int_r^{+\infty} - \rho'(s) ds\right) dr\\
&= \int f(r)\ n \mathrm{Vol}(B) r^{n-1}\rho(r)\indicatrice{R^+}(r)dr.
\end{align*}
In the last equality, we use that $-\rho'(x)\ge 0$ and $\rho(x)\to 0$ when $x\to+\infty$. Actually, as $\rho$ is log-concave, non-increasing, and non-constant as a density,   there exists  c>0 such that $\rho(x)\le e^{-cx}$ for $x$ large enough. 
\end{proof}
We  need then a kind of Poincaré inequality for $S$ in the above representation.

\begin{lemme}\label{lem:PS}
Let $S$ be a random variable on $\mathbb{R}^+$ of law $\eta(ds)=-s^n\rho'(s)ds$ where $\rho:\mathbb{R}^+\to\mathbb{R}^+$ is a log-concave non-increasing function. Then, for every smooth function $f$ on $\mathbb{R}^+$
\[
\mathrm{Var}_\eta(f)\le c \frac{\mathrm{E}(S^2)}n \| f'\|_\infty^2, 
\]
where $c>0$ is a universal constant.
\end{lemme}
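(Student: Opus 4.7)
The plan is to reduce the claimed inequality to a one-dimensional second-moment estimate $\mathrm{Var}_\eta(S)\le c\,\mathrm{E}_\eta(S^2)/n$, and then to derive that estimate by feeding Theorem~\ref{prop:rad} through the decomposition $R=ST$ that appears in the proof of Lemma~\ref{lem:decomposition}. The key observation is that the target bound, which involves only $\|f'\|_\infty$ and not $\int f'^2\,d\eta$, follows from a deterministic Lipschitz argument once one controls $\mathrm{Var}_\eta(S)$ itself: for any smooth $f$ and any constant $m$, $\mathrm{Var}_\eta(f)\le\mathrm{E}_\eta[(f(S)-f(m))^2]$, so taking $m=\mathrm{E}_\eta(S)$ and using $|f(s)-f(m)|\le\|f'\|_\infty|s-m|$ gives
\[
\mathrm{Var}_\eta(f)\le\|f'\|_\infty^{\,2}\,\mathrm{Var}_\eta(S).
\]
Thus log-concavity of $\eta$ itself is not needed, which is fortunate since $\eta$ need not be log-concave in general.

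To bound $\mathrm{Var}_\eta(S)$, I would reuse exactly the construction inside Lemma~\ref{lem:decomposition}: take $T$ independent of $S$ with density $n t^{n-1}\mathds{1}_{[0,1]}(t)$, so that $R:=ST$ has density proportional to $r^{n-1}\rho(r)\mathds{1}_{\mathbb{R}^+}(r)$, i.e.\ $R\sim\nu$ in the notation of Theorem~\ref{prop:rad}. Independence and the elementary moments $\mathrm{E}(T^k)=n/(n+k)$ give
\[
\mathrm{E}_\nu(R)=\frac{n}{n+1}\,\mathrm{E}_\eta(S),\qquad \mathrm{E}_\nu(R^2)=\frac{n}{n+2}\,\mathrm{E}_\eta(S^2).
\]
Since $\rho$ is log-concave, Theorem~\ref{prop:rad} applied to $\nu$ yields $\mathrm{E}_\nu(R)^2\ge\frac{n-1}{n}\mathrm{E}_\nu(R^2)$. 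Substituting the two identities above and rearranging,
\[
\mathrm{E}_\eta(S)^2\ge\frac{(n+1)^2(n-1)}{n^2(n+2)}\,\mathrm{E}_\eta(S^2),
\]
so that after the elementary simplification $n^2(n+2)-(n+1)^2(n-1)=n^2+n+1$,
\[
\mathrm{Var}_\eta(S)\le\frac{n^2+n+1}{n^2(n+2)}\,\mathrm{E}_\eta(S^2)\le\frac{3}{n}\,\mathrm{E}_\eta(S^2).
\]
Combining with the Lipschitz reduction finishes the proof with $c=3$.

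There is no serious obstacle: the only point requiring mild care is that the identity $R=ST$ with $S,T$ independent is used as an equality in law between already-existing random variables, so one must verify that $\eta$ is indeed the marginal of $S$ in that coupling---but this is precisely the content of the last display in the proof of Lemma~\ref{lem:decomposition}. The strategy also has the conceptual merit that it sidesteps any attempt to show log-concavity of the density $s\mapsto -s^n\rho'(s)$, which need not hold for arbitrary log-concave $\rho$.
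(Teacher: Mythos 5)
Your proof is correct and takes a genuinely different, shorter route than the paper's. The paper first derives a Poincar\'e inequality for $\nu(dr)=nr^{n-1}\rho(r)\,dr$ by combining Theorem~\ref{thm:Pvar} with Theorem~\ref{prop:rad}, then transfers it to $\eta$ via an integration by parts and a Cauchy--Schwarz bound on the cross term $\mathrm{E}\big(Rf(R)f'(R)\big)$. You observe instead that the target estimate, involving only $\|f'\|_\infty^{2}$ on the right, is already implied by the elementary bound $\mathrm{Var}_\eta(f)\le\|f'\|_\infty^{2}\,\mathrm{Var}_\eta(S)$, which reduces the general $f$ to the linear case $f(s)=s$; the remaining inequality $\mathrm{Var}_\eta(S)\le c\,\mathrm{E}_\eta(S^2)/n$ then follows purely from the moment identities in the decomposition $R=ST$ and Theorem~\ref{prop:rad}. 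This eliminates any reliance on Theorem~\ref{thm:Pvar}, avoids the integration by parts and the Cauchy--Schwarz step, and gives a sharper constant: since
\[
\frac{n^2+n+1}{n(n+2)}\le 1\quad\text{for all }n\ge1,
\]
one may in fact take $c=1$, whereas the paper's $c$ inherits the unspecified universal constant from Theorem~\ref{thm:Pvar}. Your algebra checks out. What the paper's longer integration-by-parts argument buys is mainly that it serves as the template reused in the multi-body Lemma~\ref{lem:PSk}; it is worth noting, though, that your Lipschitz reduction would appear to streamline that proof as well (and even improve the $n_0+k^2$ factor there to $n_0+k$), though verifying this carefully requires tracking the marginals of the $S_i$.
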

\begin{proof}
By integration by parts, we see that $ -\int_0^{+\infty}  r^n \rho'(r) dr =\int_0^{+\infty} n  r^{n-1} \rho(r) dr=1$. So $\nu(dr)=n  r^{n-1} \rho(r) dr$ is a probability measure, and by Theorems \ref{thm:Pvar} and \ref{prop:rad}, if $R\sim \nu$, then there exists a universal constant $c>0$ such that
\[
C_\mathrm{P}(\nu)\le c\mathrm{Var} (R) \le c\frac{\mathrm{E} (R^2)}{n}.
\]
\begin{remarque}
As pointed out by the referee, Theorem \ref{thm:Pvar} is not really necessary here to derive the first inequality since we are in one dimension. For instance, the inequality is true with $c=12$ by Corollary 4.3 of \cite{bobkov-logconcave}.
\end{remarque} 
Let $f$ be a smooth function on $\mathbb{R}^+$ such that $\mathrm{E}\big(f(R)\big)=0$ and $\| f'\|_\infty<\infty$. We perform again an integration by parts:
\begin{align*}
\mathrm{E}\left(f^2(S)\right)&= - \int_0^{+\infty} f^2(r) r^n \rho'(r) dr \\
&=\int_0^{+\infty} \big(f^2(r) n  r^{n-1} + 2f(r)f'(r) r^n\big)\rho(r) dr\\
&=\mathrm{E}\big(f^2(R)\big) + \frac2n \mathrm{E}\big(Rf(R)f'(R)\big).
\end{align*}
In the second equality, we use that $f^2(r)r^n\rho(r)\to 0$ when $r\to +\infty$ since $f$ was assumed Lipschitz and since $\rho$ decays exponentially.
The Poincaré inequality for $R$ leads to 
\[
\mathrm{E}\big(f^2(R)\big)\le c \frac{\mathrm{E} (R^2)}{n}\| f'\|_\infty^2.
\]
Moreover,
\begin{align*}
\mathrm{E}\big(Rf(R)f'(R)\big)&\le \| f'\|_\infty \sqrt{\mathrm{E}(R^2)}\sqrt{ \mathrm{E}\big(f^2(R)\big)}\\
&\le \| f'\|_\infty \sqrt{\mathrm{E}(R^2)} \sqrt{c \frac{\mathrm{E} (R^2)}{n}\| f'\|_\infty^2}\\
&=\sqrt{\frac cn}\mathrm{E} (R^2) \| f'\|_\infty^2.
\end{align*}
Thus,
\[
\mathrm{E}\left(f^2(S)\right) \le
\left(c+2\sqrt{c/n}\right) \frac{\mathrm{E}(R^2)}{n} \| f'\|_\infty^2.
\]
Now, for any smooth function $g$ on $\mathbb{R}^+$, we set $f=g-\mathrm{E}\big(g(R)\big)$. As  $\mathrm{Var}_\eta (g)\le \mathrm{E}\big(g(S)-a\big)^2$ for every $a\in\mathbb{R}$, it holds
\[
\mathrm{Var}_\eta(g) \le
\left(c+2\sqrt{c/n}\right) \frac{\mathrm{E}(R^2)}{n} \| g'\|_\infty^2.
\]
To conclude, let us remark that

\[
\mathrm{E}(R^2)
=\int_0^{+\infty} n  r^{n+1} \rho(r) dr
=-\frac{n}{n+2}\int_0^{+\infty}  r^{n+2} \rho'(r) dr
= \frac{n}{n+2}\mathrm{E}(S^2).
\]
\end{proof}

We can now prove  Proposition \ref{prop:B-sym} by tensorization.
\begin{proof}[Proof of Proposition \ref{prop:B-sym}]
Let $\mu(dx)=\rho(|x|_B)dx$ be a probability measure with $\rho$ log-concave non-increasing. Let  $S$ and $U$ be independent random variables respectively on $\mathbb{R}^+$ and uniform on $B$, such that $X=SU\sim \mu$ as in Lemma \ref{lem:decomposition}. 
Let $f$ be a smooth function on $\mathbb{R}^n$. By Lemma \ref{lem:PS}, there exists a universal constant $c>0$ such that 
\begin{align}\label{intnu}
\mathrm{E}_S\big(f^2(SU)\big) &\le \left[\mathrm{E}_S\big(f(SU)\big)\right]^2 + c\frac{\mathrm{E}(S^2)}{n} \max_{s\ge0}\left(\scal{\nabla f(sU)}{U}^2\right)\notag\\
&\le \left[\mathrm{E}_S\big(f(SU)\big)\right]^2 
+ c\frac{\mathrm{E}(S^2)}{n} |U|^2 \big\||\nabla f|\big\|_\infty^2.
\end{align}
As $B$ satisfies KLS with constant $C$, we can also apply the Poincaré inequality to  $u\mapsto\mathrm{E}_S\big(f(Su)\big)$:
\begin{align*}
\mathrm{E}_U\left[\mathrm{E}_S\big(f(SU)\big)\right]^2&\le 
\left[\mathrm{E}_U\mathrm{E}_S\big(f(SU)\big)\right]^2 + C\frac{\mathrm{E}(|U|^2)}n 
\mathrm{E}_U\left|\mathrm{E}_S \big( S\nabla f(SU)\big)\right|^2\\
&\le \left[\mathrm{E}_U\mathrm{E}_S\big(f(SU)\big)\right]^2 + C\frac{\mathrm{E}(|U|^2)}n \mathrm{E} (S^2)\big\||\nabla f|\big\|_\infty^2.
\end{align*}
So, if we take the expectation with respect to $U$ in \eqref{intnu}, we obtain:
\begin{align*}
\mathrm{Var}_\mu f &\le (c+C) \frac{\mathrm{E}(S^2)\mathrm{E}(|U|^2)}{n} \big\||\nabla f|\big\|_\infty^2\\
&=
(c+C) \frac{\mathrm{E}(|X|^2)}{n} \big\||\nabla f|\big\|_\infty^2,
\end{align*}
since $S$ and $U$ are independent, and $X=SU$. Let us remark that $C\ge1$ as it can be seen by testing  the Poincaré inequality on linear functions, so that $c+C\le (1+c)C$. We conclude by Theorem \ref{thm:Pinfty}.
\end{proof}

\section{Convex bodies of revolution}\label{sec:B-revol}
The same method works with convex bodies of revolution or more generally for convex bodies $K\subset\mathbb{R}^{n+1}$ defined by 
\[
K=\{(t,x)\in I\times\mathbb{R}^{n}; |x|_B\le R(t) \},
\]
where $I$ is a bounded interval of $\mathbb{R}$ and $R:I\to\mathbb{R}^+$ is a concave function.  Actually we show the KLS conjecture for corresponding measures $\rho(t,|x|_B)dtdx$ and more generally  log-concave measures of the form $\rho(x_0,|x_1|_{B_1},\ldots,|x_k|_{B_k})dx$.

Let $n_0$, \ldots, $n_k$ be positive integers and $N=\sum n_i$. 
Let $B_1,\ldots,B_k$ be isotropic convex bodies respectively of $\mathbb{R}^{n_1},\ldots, \mathbb{R}^{n_k}$ satisfying KLS with constant $C$. Let $\rho:\mathbb{R}^{n_0}\times(\mathbb{R}^+)^k\to\mathbb{R}^+$ be a smooth log-concave function such that:
\begin{enumerate}[(i)]
\item $\partial_j \rho \le0$ for all $j=1,\ldots,k$, and
\item \label{condbiz}$(-1)^j\partial^j_{k-(j-1),\ldots,k}\rho\ge0$ for all $j=1,\ldots,k$,
\end{enumerate}
where  $\partial_i\rho$ is the partial derivative of $s_i\mapsto\rho(x_0,s_1,\ldots,s_k)$. 

\begin{proposition}\label{revol}
With the above notations and hypotheses, if $\mu(dx)=\rho(x_0,|x_1|_{B_1},\ldots,|x_k|_{B_k})dx$ defines an isotropic probability measure on $\mathbb{R}^N$, then $\mu$ satisfies KLS with a constant depending only on $C$, $n_0$, and $k$.
\end{proposition}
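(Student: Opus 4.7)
The plan is to extend the method of Proposition \ref{prop:B-sym} by iterating the decomposition of Lemma \ref{lem:decomposition} across all $k$ radial blocks of $X$ simultaneously. Applying the lemma successively to $X_k, X_{k-1}, \ldots, X_1$, one obtains a representation
\[
X \;=\; (X_0,\, S_1 U_1,\, \ldots,\, S_k U_k),
\]
in which the $U_j\sim\mu_{B_j}$ are mutually independent and independent of $(X_0, S_1, \ldots, S_k)$, and a straightforward induction identifies the joint density of the radial vector $(X_0, S_1, \ldots, S_k)$ as
\[
h(x_0, s_1, \ldots, s_k) \;=\; \prod_{j=1}^k \mathrm{Vol}(B_j)\, s_j^{n_j} \cdot (-1)^k\, \partial^k_{1,\ldots,k}\rho(x_0, s_1, \ldots, s_k).
\]
Condition (ii), read with $j = 1, \ldots, k$, is exactly what ensures non-negativity of $h$ and of each intermediate density arising in the iteration.

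For a smooth $f$ I split the variance as
\[
\mathrm{Var}_\mu(f) \;=\; \mathrm{E}\bigl[\mathrm{Var}_U f \mid X_0, S\bigr] \;+\; \mathrm{Var}_{(X_0, S)}\bigl(\mathrm{E}_U f\bigr)
\]
and treat each piece in the spirit of Proposition \ref{prop:B-sym}. The first is handled by tensorising KLS across the independent blocks $U_1, \ldots, U_k$: conditionally on $(X_0, S)$ one has
\[
\mathrm{Var}_U f \;\le\; \sum_{j=1}^k \frac{C\,\mathrm{E}(|U_j|^2)}{n_j}\, S_j^2\, \bigl\||\nabla f|\bigr\|_\infty^2,
\]
and after taking expectations and using the isotropy identity $\mathrm{E}(|X_j|^2)/n_j = \mathrm{E}|X|^2/N$ this contribution is of the expected order $kC\,\mathrm{E}|X|^2/N\, \bigl\||\nabla f|\bigr\|_\infty^2$.

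The second piece is the technical heart. The density $h$ is not log-concave in general, so Theorem \ref{thm:Pvar} cannot be applied to it directly. The remedy is to compare $h$ with its polar companion
\[
\nu(dx_0\, dr_1\cdots dr_k) \;\propto\; \prod_{j=1}^k r_j^{n_j-1}\, \rho(x_0, r_1, \ldots, r_k)\, dx_0\, dr,
\]
which is nothing but the pushforward of $\mu$ under $X \mapsto (X_0, |X_1|_{B_1}, \ldots, |X_k|_{B_k})$ and which \emph{is} log-concave on $\mathbb{R}^{n_0}\times(\mathbb{R}^+)^k$, as a product of the log-concave factors $r_j^{n_j-1}$ and $\rho$. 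Theorem \ref{thm:Pvar} applied to $\nu$, combined with the isotropy of $\mu$ and with standard bounds on the isotropic constants of the $B_j$'s, yields $C_\mathrm{P}(\nu) \le c(n_0, k)\, \mathrm{E}|X|^2/N$. An iterated integration by parts in the $s_j$ variables, generalising the one-variable calculation in the proof of Lemma \ref{lem:PS}, then relates $\mathrm{E}_h(g^2)$ to $\mathrm{E}_\nu(g^2)$ plus cross terms controlled by Cauchy--Schwarz and by $\bigl\||\nabla g|\bigr\|_\infty$, and produces an estimate
\[
\mathrm{Var}_h(g) \;\le\; \kappa(k)\, C_\mathrm{P}(\nu)\, \bigl\||\nabla g|\bigr\|_\infty^2.
\]

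Applied with $g = \mathrm{E}_U f$ and added to the first piece, this gives $\mathrm{Var}_\mu(f) \le K\, \bigl\||\nabla f|\bigr\|_\infty^2$ with $K$ of the required order $\mathrm{E}|X|^2/N$ up to a constant depending only on $C$, $n_0$, and $k$, and Theorem \ref{thm:Pinfty} converts this into the desired Poincaré bound. The main obstacle I foresee is the $k$-fold integration by parts: unlike in the one-variable case, naively alternating $\partial_{s_j}$ on the $g^2$ factor produces terms involving higher-order mixed derivatives of $g$, so one has to organise the calculation (perhaps by reintegrating some cross terms or by a suitable induction on $k$) so that only first-order $\partial_{s_j} g$'s survive; conditions (i) and (ii) are used here both to make all boundary terms at infinity and at $s_j = 0$ vanish and to keep every iterated partial derivative of $\rho$ appearing in the computation of definite sign.
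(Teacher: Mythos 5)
Your approach reproduces the paper's own proof essentially step for step: the same iterated decomposition $X=(X_0,S_1U_1,\ldots,S_kU_k)$ with the joint density of $(X_0,S)$ exactly as you identify it, the same observation that this density is not log-concave while its polar companion $\nu(dx_0\,dr)\propto\prod_j r_j^{n_j-1}\rho(x_0,r)\,dx_0\,dr$ is, the same use of Theorem \ref{thm:Pvar}, Theorem \ref{prop:rad} and isotropy to get $C_\mathrm{P}(\nu)\le c(n_0+k)\,\mathrm{E}|X|^2/N$, and the same $k$-fold integration by parts to transfer an $\big\||\nabla f|\big\|_\infty$-type variance estimate from $\nu$ to $(X_0,S)$ --- this is precisely the content of the paper's Lemma \ref{lem:PSk}. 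The only cosmetic difference is that you condition on $(X_0,S)$ first and then on $U$, whereas the paper conditions in the opposite order; both orders go through, since both inner bounds are of $\big\||\nabla f|\big\|_\infty^2$ type and tensorize cleanly.

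On the one step you flag as the obstacle: the $k$-fold integration by parts is organised more simply than by the reintegration or induction you suggest. After reducing to $f\ge0$ (replace $f$ by $|f|$, which has the same $\big\||\nabla f|\big\|_\infty$), the paper bounds the freshly created factor $\partial_jf$ by $\big\||\nabla f|\big\|_\infty$ \emph{immediately} after integrating by parts in $r_j$, and only then differentiates in the next variable $r_{j+1}$; consequently no second-order mixed derivative of $f$ is ever produced. This pointwise replacement is an upper bound in the correct direction because conditions (i)--(ii) guarantee that each intermediate weight $(-1)^{k-j}\partial^{k-j}_{j+1,\ldots,k}\rho$ is nonnegative, and they also give the vanishing of the boundary terms you mention. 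With that clarification your proposal matches the paper's argument.
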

The above condition (\ref{condbiz}) on $\rho$ is not satisfactory, but we did not succed in getting rid of it with our method of decomposition. When using the classical polar decomposition instead, we do not need anymore this condition, but an other one arises on the $B_i$'s themselves, because of the differentiation on $\partial B_i$. Nevertheless, when $n_0=k=1$ and $B_1$ is an euclidean ball, the latter proposition is sufficient to deduce the following.
\begin{corollaire}
The convex bodies of revolution satisfy the KLS conjecture.
\end{corollaire}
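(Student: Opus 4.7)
The plan is to apply Proposition \ref{revol} in the case $n_0=k=1$, taking $B_1$ to be the Euclidean unit ball in $\mathbb{R}^n$. The Euclidean ball satisfies KLS with a universal constant (Payne-Weinberger, recalled in the introduction), and since the constant produced by Proposition \ref{revol} depends only on $C$, $n_0$ and $k$, fixing $n_0=k=1$ makes that output universal---exactly what the corollary demands.

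First I would put $K$ into isotropic position without breaking its revolution symmetry. If the axis of revolution is the first coordinate axis of $\mathbb{R}^{1+n}$, then invariance of $K$ under the orthogonal group fixing that axis forces the covariance of $\mu_K$ (after centering) to have the block form $\mathrm{diag}(\sigma_0^2,\sigma_1^2 I_n)$; the diagonal rescaling $(t,x)\mapsto(t,(\sigma_0/\sigma_1)x)$ equalizes the two blocks while keeping $K$ a convex body of revolution. In this isotropic form,
\[
K=\{(t,x)\in I\times\mathbb{R}^n : |x|_2\le R(t)\},
\]
with $I\subset\mathbb{R}$ a bounded interval and $R:I\to\mathbb{R}^+$ concave (from convexity of $K$). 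The uniform measure on $K$ then has the required shape $\rho(t,|x|_2)\,dt\,dx$ with $\rho(t,s)=c\,\indicatrice{I}(t)\indicatrice{[0,R(t)]}(s)$, which is log-concave (the region $\{(t,s):t\in I,\,0\le s\le R(t)\}$ is convex) and non-increasing in $s$; condition (\ref{condbiz}) degenerates, when $k=1$, to the same inequality as (i), so both hypotheses of Proposition \ref{revol} hold.

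The one remaining obstacle is that $\rho$ is not smooth, whereas Proposition \ref{revol} is stated for smooth log-concave densities. I would handle this by a standard approximation: mollify $R$ into smooth strictly concave functions $R_\varepsilon\to R$ and the indicators into log-concave smooth weights, for instance
\[
\rho_\varepsilon(t,s)=\chi_\varepsilon(t)\exp\bigl(-\varepsilon^{-1}[s-R_\varepsilon(t)]_+^2\bigr),
\]
with $\chi_\varepsilon$ a smooth log-concave bump concentrating on $I$. Each $\rho_\varepsilon$ is smooth, log-concave, non-increasing in $s$, and satisfies (\ref{condbiz}); after a small diagonal rescaling to enforce isotropy, Proposition \ref{revol} yields a Poincar\'e bound with universal constant. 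One then passes to the weak limit $\varepsilon\to0$, using lower semi-continuity of the Poincar\'e constant and continuity of the second moment along weak convergence of log-concave probability measures, to obtain the universal KLS bound for $\mu_K$. The main technical point is this approximation-and-limit argument, but it is routine for log-concave measures and is the only step beyond a direct invocation of Proposition \ref{revol}.
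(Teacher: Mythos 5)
Your proof is correct and follows the route the paper intends, namely invoking Proposition \ref{revol} with $n_0=k=1$ and $B_1$ the Euclidean ball, which the paper signals in the sentence immediately preceding the corollary but does not spell out. The supporting steps you add (putting a body of revolution in isotropic position via a diagonal rescaling preserving the axial symmetry, observing that condition (\ref{condbiz}) collapses to condition (i) when $k=1$, and smoothing the indicator density to meet the smoothness hypothesis of the proposition before passing to the limit) are all appropriate and fill in details the paper leaves implicit.
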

\begin{proof}[Proof of Proposition \ref{revol}]
The same method as for $B$-symmetric measures applies.
In the same way as in Lemma \ref{lem:decomposition}, let $(X_0,S)=(X_0,S_1,\ldots,S_k)$ and $U_1$, \ldots, $U_k$ be independent variables respectively on $\mathbb{R}\times\left(\mathbb{R}^+\right)^k$ and $B_1$, \dots, $B_k$, with
\[
(X_0,S)\sim(-1)^k \mathrm{Vol}(B_1)\cdots\mathrm{Vol}(B_k)\ 
s_1^{n_1}\cdots s_k^{n_k}
\partial^k_{1,\ldots,k} \rho(x_0,s)\ \indicatrice{\left(\mathbb{R}^+\right)^k}(s)dx_0ds,
\]
\[
\forall i, \quad U_i\sim \mu_{B_i}.
\]
Then $X=(X_0,S_1U_1,\ldots,S_kU_k)$ is of law $\mu$.
Moreover, we can suppose without loss of generality that that for all $i$,
\begin{equation*}
\mathrm{E}|U_i|^2=1. 
\end{equation*} 
Else, we choose $\lambda_i^{-2}=\mathrm{E}(|U_i|^2)$ such that $V_i\sim\mu_{\lambda_i B}$ satisfies $\mathrm{E}(|{V_i}|^2)=1$, and replace $\rho$ by $\rho_\lambda:(x_0,s_1,\ldots,s_k)\mapsto\rho(x_0,\lambda_1 s_1,\ldots,\lambda_k,s_k)$ which is still log-concave and non-increasing with respect to its $k$ last variables. In this case, $\mu(dx)=\rho_\lambda(x_0,|x_1|_{\lambda_1 B_1},\ldots,|x_k|_{\lambda_k B_k})dx$. 

Recall now that $\mu$ is isotropic, i.e. that 
\[
\mathrm{E}(X)=0
\]
and 
\[
\forall y\in\mathbb{R}^N,\quad
\mathrm{E}\left(\scal{X}{y}^2\right)
=\frac{\mathrm{E}(|X|^2)}{N}|y|^2.
\]
This implies that each $B_i$ had to be itself an isotropic convex body, $X_0$ is an isotropic log-concave variable, and \begin{equation}\label{eq:SUiso}
\forall i,\quad \frac{\mathrm{E}(S_i^2)}{n_i}=\frac{\mathrm{E}(|X_0|^2)}{n_0}=\frac{\mathrm{E}(|X|^2)}{N},
\end{equation}
since $\mathrm{E}(|U_i|^2)=1$.
In the same spirit as Lemma \ref{lem:PS}, one can show 
\begin{lemme}\label{lem:PSk}
For any smooth function $f$ on $\mathbb{R}^{n_0}\times(\mathbb{R}^+)^k$, 
\[
\mathrm{Var}_{(X_0,S)}(f)\le c(n_0+k^2)\frac{\mathrm{E}|X|^2}N \big\||\nabla f|\big\|_\infty^2 ,\] 
where $c>0$ is a universal constant. 
\end{lemme} 
\noindent 
Let us postpone the proof of this lemma and show how to deduce the claim of the proposition. Let $f$ be a smooth function on $\mathbb{R}^{N}$. Let us denote  by $\nabla_if$ the derivative of $$x_i\in\mathbb{R}^{n_i}\mapsto f(x_0,x_1,\ldots,x_k),$$ and
$su=(s_1u_1,\ldots,s_ku_k)$ whenever $s=(s_1,\ldots,s_k)\in\mathbb{R}^k$ and $u=(u_1,\ldots,u_k)$ with $u_i\in\mathbb{R}^{n_i}$.
As in the proof of Proposition \ref{prop:B-sym}, we apply Poincaré inequality for $(X_0,S)$ and then for each $U_i$. First we consider $(x_0,s)\mapsto f(x_0,sU)$, where $U=(U_1,\ldots,U_k)$: 
\begin{multline*}
\mathrm{E}_{(X_0,S)}\big(f^2(X_0,SU)\big)
\le \left[\mathrm{E}_{(X_0,S)}\big(f(X_0,SU)\big) \right]^2 \\
{+ c(n_0+k^2)\frac{\mathrm{E}(|X|^2)}{N} \max_{(x_0,s)\in\mathbb{R}\times(\mathbb{R}^+)^k}
\left(|\nabla_0 f(x_0,sU)|^2 +\sum_{i=1}^k \scal{\nabla_i f(x_0,sU)}{U_i}^2 \right) }
\\
\le \left[\mathrm{E}_{(X_0,S)}\big(f(X_0,SU)\big)\right]^2 + c(n_0+k^2)\frac{\mathrm{E}(|X|^2)}{N} \left(1+{\textstyle\sum_i|U_i|^2}\right) \big\||\nabla f|\big\|_\infty^2.
\end{multline*}
Then, as each $B_i$ satisfies KLS with constant $C$, \begin{align*}
\mathrm{E}_U\left[\mathrm{E}_{(X_0,S)}\big(f(X_0,SU)\big)\right]^2 &\le 
\left[\mathrm{E}\big(f(X)\big)\right]^2 +\sum_{i=1}^k C\frac{\mathrm{E}(|U_i|^2)}{n_i} 
\mathrm{E}_{U_i}\left|\mathrm{E}_{(X_0,S)} \big( S_i\nabla_i f(X_0,SU)\big)\right|^2\\
&\le \left[\mathrm{E}\big(f(X)\big)\right]^2 + \sum_{i=1}^k C\frac{\mathrm{E}(|U_i|^2)}{n_i} 
\mathrm{E}(S_i^2)\mathrm{E}(|\nabla_i f(X)|^2)\\
&\le \left[\mathrm{E}\big(f(X)\big)\right]^2 + C\frac{\mathrm{E}(|X|^2)}{N} 
\mathrm{E}(|\nabla f(X)|^2).
\end{align*}
The second inequality comes from Cauchy--Schwarz inequality and the independence of $S$ and $U$.
It follows that
\begin{align*}
\mathrm{Var}_\mu(f)&\le \left[ c(n_0+k^2) \big(1+{\textstyle\sum_i\mathrm{E}(|U_i|^2)}\big) 
+ C\right]\frac{\mathrm{E}(|X|^2)}{N}
\big\||\nabla f|\big\|_\infty^2\\
&=\left[2c(n_0+k^2)(1+k)+C\right]\frac{\mathrm{E}(|X|^2)}{N}\big\||\nabla f|\big\|_\infty^2.
\end{align*}
 We are done, thanks to Theorem \ref{thm:Pinfty}.
\end{proof}
\begin{proof}[Proof of Lemma \ref{lem:PSk}]
Let $R=(R_1,\ldots,R_k)$ be a random variable on $(\mathbb{R}^+)^k$ such that 
\[
(X_0,R)\sim \nu(dx_0dr)=\rho(x_0,r)
\left(\prod_{i=1}^k\mathrm{Vol}(B_i)n_i \indicatrice{\mathbb{R}^+}(r_i) r_i^{n_i-1}\,dr_i\right)  \,dx_0.
\]
Then one can see thanks to the classical polar decomposition of $\mu$ that 
\[
\forall i,\quad R_i\sim S_i|U_i|_{B_i}.
\]
In particular, and thanks to \eqref{eq:SUiso},
\[
\forall i,\quad \frac{\mathrm{E}(R_i^2)}{n_i}=\frac{n_i}{n_i+2}\frac{\mathrm{E}(S_i^2)}{n_i}=\frac{n_i}{n_i+2}\frac{\mathrm{E}(|X|^2)}{N}\le\frac{\mathrm{E}(|X|^2)}{N}.
\]
Moreover $\nu$ is a  log-concave (non-isotropic) measure  and by Theorem \ref{thm:Pvar}, there exists a universal constant $c>0$ such that
\[C_P(\nu) \le c\left(\mathrm{E}(|X_0|^2)+\sum_i\mathrm{Var}(R_i)\right).\] 
Now  the density of $R_i$ is proportional to 
\[
r_i^{n_i-1}\int \rho(x_0,r) dx_0dr_1\cdots dr_{i-1}dr_{i+1}\cdots dr_k
\] 
and  $r_i\mapsto\int \rho(x_0,r) dx_0dr_1\cdots dr_{i-1}dr_{i+1}\cdots dr_k$ is a non-increasing function which is also log-concave by the Prékopa--Leindler theorem \cite{pre1,lei,pre2}. According to Theorem \ref{prop:rad}, 
\[
\mathrm{Var} (R_i)\le \frac{\mathrm{E}(R_i^2)}{n_i}\le\frac{\mathrm{E}(|X|^2)}{N}.
\] 
Using \eqref{eq:SUiso}, we conclude that:
\begin{equation}\label{eq:Poincare-nu}
C_P(\nu) \le c(n_0+k)\frac{\mathrm{E}(|X|^2)}{N}.
\end{equation}

As in the proof of Lemma \ref{lem:PS}, to prove the claim, it is enough to consider smooth functions $f$ on $\mathbb{R}^{n_0}\times(\mathbb{R}^+)^k$ such that $\mathrm{E}\big(f(R)\big)=0$ and $\big\||\nabla f|\big\|_\infty<\infty$. We can also assume $f\ge0$, else we consider $g=|f|$ instead of $f$: $g$ is differentiable almost everywhere and $\big\||\nabla g|\big\|_\infty=\big\||\nabla f|\big\|_\infty$. We set $Z=\mathrm{Vol}(B_1)\cdots\mathrm{Vol}(B_k)$. Then we apply  $k$ successive integrations by parts and bound the derivatives of $f$  by $\big\||\nabla f|\big\|_\infty$ each times they appear: 
\begin{align}
&\mathrm{E}
{\big(f^2(X_0,S)\big)} \notag\\
&=Z\int f^2(x_0,r) \ (-1)^k  
r_1^{n_1}\cdots r_k^{n_k}
\partial^k_{1,\ldots,k} \rho(x_0,r)\ \indicatrice{\left(\mathbb{R}^+\right)^k}(r)dx_0dr \notag\\
&=Z\int \left[f^2  + 2\frac{r_1}{n_1}f\partial_1f \right]
(-1)^{k-1}  
n_1r_1^{n_1-1}r_2^{n_2}\cdots r_k^{n_k}
\partial^{k-1}_{2,\ldots,k} \rho(x_0,r)\ \indicatrice{\left(\mathbb{R}^+\right)^k}(r)dx_0dr \notag\\ 
&\le Z\int \left[f^2  + 2\frac{r_1}{n_1}f\big\||\nabla f|\big\|_\infty \right]
(-1)^{k-1}  
n_1r_1^{n_1-1}r_2^{n_2}\cdots r_k^{n_k}
\partial^{k-1}_{2,\ldots,k} \rho(x_0,r)\ \indicatrice{\left(\mathbb{R}^+\right)^k}(r)dx_0dr \notag\\ 
&= Z\int \left[f^2 + 2\frac{r_1}{n_1}f\big\||\nabla f|\big\|_\infty + 2 \frac{r_2}{n_2}f\partial_2f + 2\frac{r_1r_2}{n_1n_2}\partial_2f\big\||\nabla f|\big\|_\infty \right]
\notag\\
&\hspace{8em}
(-1)^{k-2}  
n_1n_2r_1^{n_1-1}r_2^{n_2-1}r_3^{n_3}\cdots r_k^{n_k}
\partial^{k-2}_{3,\ldots,k} \rho(x_0,r)\ \indicatrice{\left(\mathbb{R}^+\right)^k}(r)dx_0dr\notag\\ 
&\le Z\int \left[f^2  + 2 \left(\frac{r_1}{n_1}+\frac{r_2}{n_2}\right)f\big\||\nabla f|\big\|_\infty + 2\frac{r_1r_2}{n_1n_2}\big\||\nabla f|\big\|_\infty^2 \right]
\notag\\
&\hspace{8em}
(-1)^{k-2}  
n_1n_2r_1^{n_1-1}r_2^{n_2-1}r_3^{n_3}\cdots r_k^{n_k}
\partial^{k-2}_{3,\ldots,k} \rho(x_0,r)\ \indicatrice{\left(\mathbb{R}^+\right)^k}(r)dx_0dr\notag\\ 
&\vdots\notag\\
&\le \mathrm{E}\big(f^2(X_0,R)\big) 
+ 2 \big\||\nabla f|\big\|_\infty \mathrm{E}\left( f(X_0,R) \sum_i \frac{R_i}{n_i}\right)
+ 2 \big\||\nabla f|\big\|_\infty^2 \mathrm{E}\left(\sum_{i<j}\frac{R_iR_j}{n_in_j}\right).\label{eq:intpartk}
\end{align}
By Cauchy--Schwarz inequality,
\begin{equation}\label{eq:CS1k}
\mathrm{E}\left( f(X_0,R) \sum_i \frac{R_i}{n_i}\right)
\le \sqrt{\mathrm{E}\big(f^2(X_0,R)\big)}\sqrt{\mathrm{E}\left(\sum_i\frac {R_i}{n_i}\right)^2},
\end{equation}
and
\begin{equation}\label{eq:CS2k}
\mathrm{E}\left(\sum_i\frac {R_i}{n_i}\right)^2
\le \left(\sum_i\frac1{n_i}\right)\mathrm{E}\left(\sum_i\frac {R_i^2}{n_i}\right)
\le k^2 \frac{\mathrm{E}|X|^2}N.
\end{equation}
The Poincaré inequality \eqref{eq:Poincare-nu} satisfied by $\nu$ leads to 
\begin{equation}\label{eq:PRk}
\mathrm{E}\big(f^2(X_0,R)\big)\le c(n_0+k)\frac{\mathrm{E}(|X|^2)}{N}\big\||\nabla f|\big\|_\infty^2.
\end{equation}
We note that $\sum_{i<j}\frac{R_iR_j}{n_in_j}\le \left(\sum_i \frac{R_i}{n_i}\right)^2$ and plug the estimates \eqref{eq:CS1k}, \eqref{eq:CS2k}, and \eqref{eq:PRk} in the inequality \eqref{eq:intpartk}, so that
\begin{align*}
\mathrm{E}\big(f^2(X_0,S)\big)&\le
\left(c(n_0+k)+2\sqrt{c(n_0+k)}k+2k^2\right) \frac{\mathrm{E}|X|^2}N \big\||\nabla f|\big\|_\infty^2\\
&\le
\left(2c(n_0+k)+3k^2\right) \frac{\mathrm{E}|X|^2}N \big\||\nabla f|\big\|_\infty^2.
\end{align*}
Consequently, the assertion follows.
\end{proof}

\begin{remarque}
In the special case of the euclidean ball, one can improve the above estimate  using the classical polar decomposition $X=(R_1\theta_1,\ldots,R_k\theta_k)$ with $\theta_i\sim \sigma_{\mathbb{S}^{n_i-1}}$. Moreover, we can drop the condition (\ref{condbiz}) of page \pageref{condbiz}. We omit the details and only state:
\begin{proposition}
Let $\mu(dx)=\rho(x_0,|x_1|,\ldots,|x_k|)dx$ be an isotropic log-concave probability measure on $\mathbb{R}^N$, with $x_i\in\mathbb{R}^{n_i}$. Then there exists a universal constant $c$ such that 
\[
C_\mathrm{P}(\mu) \le c (n_0+k) \frac{\mathrm{E}_\mu|X|^2}N.
\]
\end{proposition}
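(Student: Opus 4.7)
The plan is to rerun the strategy of Propositions \ref{prop:B-sym} and \ref{revol} but with the classical polar decomposition in place of the $SU$-decomposition, as hinted in the remark. For each $i\ge 1$, write $x_i=r_i\theta_i$ with $r_i=|x_i|$ and $\theta_i\in\mathbb{S}^{n_i-1}$. Under $\mu$, the directions $\theta_1,\dots,\theta_k$ are mutually independent, each uniform on its sphere, and independent of $(X_0,R_1,\dots,R_k)\sim\nu$, where $\nu$ has density proportional to $\rho(x_0,r_1,\dots,r_k)\prod_i r_i^{n_i-1}$ on $\mathbb{R}^{n_0}\times(\mathbb{R}^+)^k$. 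This measure is log-concave, since $r_i\mapsto(n_i-1)\log r_i$ is concave, and \textbf{crucially} no sign condition on mixed derivatives of $\rho$ is required, so condition (\ref{condbiz}) becomes unnecessary.

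The first step is to bound $C_\mathrm{P}(\nu)$. Theorem \ref{thm:Pvar} gives $C_\mathrm{P}(\nu)\le c\bigl(\mathrm{E}|X_0-\mathrm{E}X_0|^2+\sum_i\mathrm{Var}(R_i)\bigr)$. Isotropy of $\mu$ yields $\mathrm{E}|X_0|^2=n_0\mathrm{E}|X|^2/N$ and $\mathrm{E}(R_i^2)=\mathrm{E}|X_i|^2=n_i\mathrm{E}|X|^2/N$. The marginal density of $R_i$ under $\nu$ is proportional to $r_i^{n_i-1}\tilde\rho_i(r_i)$ with $\tilde\rho_i$ log-concave (by Pr\'ekopa--Leindler applied to the partial integration) and non-increasing (thanks to $\partial_i\rho\le 0$), so Theorem \ref{prop:rad} yields $\mathrm{Var}(R_i)\le\mathrm{E}(R_i^2)/n_i=\mathrm{E}|X|^2/N$. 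Assembling these inputs gives $C_\mathrm{P}(\nu)\le c(n_0+k)\mathrm{E}|X|^2/N$.

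Next I would tensorize. Set $g(x_0,r)=\mathrm{E}_\theta f(x_0,r_1\theta_1,\dots,r_k\theta_k)$ and split the variance as $\mathrm{Var}_\mu f=\mathrm{E}_\nu\mathrm{Var}_\theta f+\mathrm{Var}_\nu g$. For the sphere contribution, tensorize the Poincar\'e inequality on $\prod_i\mathbb{S}^{n_i-1}$ (whose spectral gap is $n_i-1$) and relate the intrinsic gradient on $R_i\mathbb{S}^{n_i-1}$ to the ambient Euclidean gradient, which incurs a factor $R_i^2$:
\[
\mathrm{E}_\nu\mathrm{Var}_\theta f\le\sum_i\frac{\mathrm{E}(R_i^2)}{n_i-1}\bigl\||\nabla f|\bigr\|_\infty^2\le 2k\,\frac{\mathrm{E}|X|^2}{N}\bigl\||\nabla f|\bigr\|_\infty^2.
\]
For the other term, Jensen applied coordinate by coordinate gives the pointwise bound $|\nabla g|^2\le\mathrm{E}_\theta|\nabla f|^2\le\bigl\||\nabla f|\bigr\|_\infty^2$, so the Poincar\'e inequality for $\nu$ gives $\mathrm{Var}_\nu g\le c(n_0+k)\mathrm{E}|X|^2/N\cdot\bigl\||\nabla f|\bigr\|_\infty^2$. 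Adding the two estimates and invoking Theorem \ref{thm:Pinfty} concludes the proof.

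The main point requiring care is the differential-geometric identification of intrinsic gradients on $R_i\mathbb{S}^{n_i-1}$ as projections of the Euclidean gradient, along with the handling of degenerate low-dimensional cases (notably $n_i=1$, where $\mathbb{S}^0=\{\pm1\}$ and the sphere Poincar\'e inequality reduces to that of the symmetric Bernoulli measure, still with constant of order $1/n_i$). The improvement from $k^2$ to $k$ over Proposition \ref{revol} arises precisely because the sphere tensorization produces additive contributions $\sum_i \mathrm{E}(R_i^2)/(n_i-1)$, instead of the $k$ successive integrations by parts used in Lemma \ref{lem:PSk}, which accumulated cross terms $\sum_{i<j} R_iR_j/(n_in_j)$.
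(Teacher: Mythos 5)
The paper states this proposition without proof, merely indicating that one should use the classical polar decomposition $X=(X_0,R_1\theta_1,\ldots,R_k\theta_k)$ and that condition (ii) can then be dropped. Your proof is a correct reconstruction of exactly this strategy: you bound $C_\mathrm{P}(\nu)$ via Theorems \ref{thm:Pvar} and \ref{prop:rad} as in \eqref{eq:Poincare-nu}, split the variance into a spherical part and a radial part, handle the spherical part with the sphere spectral gap (correctly noting that for $n_i\ge 2$ one has $1/(n_i-1)\le 2/n_i$, and treating $n_i=1$ separately via the symmetric Bernoulli measure so the constant remains of order $1/n_i$), handle the radial part with the Poincar\'e inequality for $\nu$ combined with the Jensen estimate $|\nabla g|^2\le \mathrm{E}_\theta|\nabla f|^2$, and close with Theorem \ref{thm:Pinfty}. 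One small point worth making explicit: the hypothesis $\partial_i\rho\le 0$, which you invoke to apply Theorem \ref{prop:rad} to the marginal of $R_i$, is not assumed in the statement but is automatic, since for each fixed $x_0$ and $x_j$ ($j\ne i$) the log-concave even function $x_i\mapsto\rho(\ldots,|x_i|,\ldots)$ on $\mathbb{R}^{n_i}$ is maximized at the origin and hence non-increasing in $|x_i|$; integrating then shows $\tilde\rho_i$ is non-increasing as needed. Your explanation of why the polar decomposition avoids the cross terms responsible for the $k^2$ in Lemma \ref{lem:PSk}, yielding the improved linear dependence in $k$, is also accurate.
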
\end{remarque}

\paragraph{Acknowledgments.} I would like to thank Franck Barthe for fruitful discussions, and the referee for his judicious remarks and helpful suggestions.


\begin{thebibliography}{10}

\bibitem{barthe-wolff}
Franck Barthe and Pawe{\l} Wolff.
\newblock Remarks on non-interacting conservative spin systems: the case of
  gamma distributions.
\newblock {\em Stochastic Process. Appl.}, 119(8):2711--2723, 2009.

\bibitem{bobkov-logconcave}
Sergey~G. Bobkov.
\newblock Isoperimetric and analytic inequalities for log-concave probability
  measures.
\newblock {\em Ann. Probab.}, 27(4):1903--1921, 1999.

\bibitem{bobkov-radial-logconcave}
Sergey~G. Bobkov.
\newblock Spectral gap and concentration for some spherically symmetric
  probability measures.
\newblock In {\em Geometric aspects of functional analysis}, volume 1807 of
  {\em Lecture Notes in Math.}, pages 37--43. Springer, Berlin, 2003.

\bibitem{bobkov-isop-const}
Sergey~G. Bobkov.
\newblock On isoperimetric constants for log-concave probability distributions.
\newblock In {\em Geometric aspects of functional analysis}, volume 1910 of
  {\em Lecture Notes in Math.}, pages 81--88. Springer, Berlin, 2007.

\bibitem{bobkov-houdre-prod}
Sergey~G. Bobkov and C.~Houdr{\'e}.
\newblock Isoperimetric constants for product probability measures.
\newblock {\em Ann. Probab.}, 25(1):184--205, 1997.

\bibitem{BolLead}
B{\'e}la Bollob{\'a}s and Imre Leader.
\newblock Edge-isoperimetric inequalities in the grid.
\newblock {\em Combinatorica}, 11(4):299--314, 1991.

\bibitem{Borell-logconcave}
Christer Borell.
\newblock Convex measures on locally convex spaces.
\newblock {\em Ark. Mat.}, 12:239--252, 1974.

\bibitem{burago-mazya}
Yuri{\u{\i}}~D. Burago and Vladimir~G. Maz'ya.
\newblock {\em Potential theory and function theory for irregular regions}.
\newblock Translated from Russian. Seminars in Mathematics, V. A. Steklov
  Mathematical Institute, Leningrad, Vol. 3. Consultants Bureau, New York,
  1969.

\bibitem{cheeger}
Jeff Cheeger.
\newblock A lower bound for the smallest eigenvalue of the {L}aplacian.
\newblock In {\em Problems in analysis ({P}apers dedicated to {S}alomon
  {B}ochner, 1969)}, pages 195--199. Princeton Univ. Press, Princeton, N. J.,
  1970.

\bibitem{hadwiger}
Hugo Hadwiger.
\newblock Gitterperiodische {P}unktmengen und {I}soperimetrie.
\newblock {\em Monatsh. Math.}, 76:410--418, 1972.

\bibitem{KLS}
Ravindran Kannan, L{\'a}szl{\'o} Lov{\'a}sz, and Mikl{\'o}s Simonovits.
\newblock Isoperimetric problems for convex bodies and a localization lemma.
\newblock {\em Discrete Comput. Geom.}, 13(3-4):541--559, 1995.

\bibitem{klartag-TCL-powerlaw}
Bo'az Klartag.
\newblock Power-law estimates for the central limit theorem for convex sets.
\newblock {\em J. Funct. Anal.}, 245(1):284--310, 2007.

\bibitem{klartag-uncond}
Bo'az Klartag.
\newblock A {B}erry-{E}sseen type inequality for convex bodies with an
  unconditional basis.
\newblock {\em Probab. Theory Related Fields}, 145(1-2):1--33, 2009.

\bibitem{latala-woj}
Rafa{\l} Lata{\l}a and Jakub~O. Wojtaszczyk.
\newblock On the infimum convolution inequality.
\newblock {\em Studia Math.}, 189(2):147--187, 2008.

\bibitem{ledoux-geom-bounds}
Michel Ledoux.
\newblock Spectral gap, logarithmic {S}obolev constant, and geometric bounds.
\newblock In {\em Surveys in differential geometry. {V}ol. {IX}}, Surv. Differ.
  Geom., IX, pages 219--240. Int. Press, Somerville, MA, 2004.

\bibitem{lei}
L{\'a}szl{\'o} Leindler.
\newblock On a certain converse of {H}\"older's inequality. {II}.
\newblock {\em Acta Sci. Math. (Szeged)}, 33(3-4):217--223, 1972.

\bibitem{mazya2}
Vladimir~G. Maz'ja.
\newblock The negative spectrum of the higher-dimensional {S}chr\"odinger
  operator.
\newblock {\em Dokl. Akad. Nauk SSSR}, 144:721--722, 1962.

\bibitem{mazya1}
Vladimir~G. Maz'ja.
\newblock On the solvability of the {N}eumann problem.
\newblock {\em Dokl. Akad. Nauk SSSR}, 147:294--296, 1962.

\bibitem{EMilmanConvIsopSpectConc}
Emanuel Milman.
\newblock On the role of convexity in isoperimetry, spectral gap and
  concentration.
\newblock {\em Invent. Math.}, 177(1):1--43, 2009.

\bibitem{MilPaj}
Vitali~D. Milman and Alain Pajor.
\newblock Isotropic position and inertia ellipsoids and zonoids of the unit
  ball of a normed {$n$}-dimensional space.
\newblock In {\em Geometric aspects of functional analysis (1987--88)}, volume
  1376 of {\em Lecture Notes in Math.}, pages 64--104. Springer, Berlin, 1989.

\bibitem{pre1}
Andr{\'a}s Pr{\'e}kopa.
\newblock Logarithmic concave measures with application to stochastic
  programming.
\newblock {\em Acta Sci. Math. (Szeged)}, 32:301--316, 1971.

\bibitem{pre2}
Andr{\'a}s Pr{\'e}kopa.
\newblock On logarithmic concave measures and functions.
\newblock {\em Acta Sci. Math. (Szeged)}, 34:335--343, 1973.

\bibitem{rudin}
Walter Rudin.
\newblock {\em Real and complex analysis}.
\newblock McGraw-Hill Book Co., New York, third edition, 1987.

\bibitem{sasha-lp}
Sasha Sodin.
\newblock An isoperimetric inequality on the {$\ell_p$} balls.
\newblock {\em Ann. Inst. H. Poincaré Probab. Statist.}, 44(2):362--373, 2008.

\end{thebibliography}
\end{document}